\DeclareMathOperator{\dist}{dist}                           
\DeclareMathOperator{\lspan}{span}                          
\DeclareMathOperator{\supp}{supp}                           
\DeclareMathOperator{\ext}{ext}                             
\DeclareMathOperator{\Lip}{Lip}                             
\DeclareMathOperator{\lip}{lip}                             
\newcommand{\NN}{\mathbb{N}}                                
\newcommand{\RR}{\mathbb{R}}                                
\newcommand{\abs}[1]{\left|{#1}\right|}                     
\newcommand{\pare}[1]{\left({#1}\right)}                    
\newcommand{\set}[1]{\left\{{#1}\right\}}                   
\newcommand{\norm}[1]{\left\|{#1}\right\|}                  
\newcommand{\dual}[1]{{#1}^\ast}                            
\newcommand{\ddual}[1]{{#1}^{\ast\ast}}                     
\newcommand{\ball}[1]{B_{{#1}}}                             
\newcommand{\duality}[1]{\left<{#1}\right>}                 
\newcommand{\cl}[1]{\overline{#1}}                          
\newcommand{\wscl}[1]{\overline{#1}^{\dual{w}}}             
\newcommand{\wsconv}{\stackrel{\dual{w}}{\longrightarrow}}  
\newcommand{\lipfree}[1]{\mathcal{F}({#1})}                 
\newcommand{\lipnorm}[1]{\norm{#1}_L}                       
\newcommand{\mol}[1]{m_{#1}}                                
\newcommand{\meas}[1]{\mathcal{M}({#1})}                    
\newcommand{\rcomp}[1]{{#1}^\mathcal{R}}                    
\newcommand{\bwt}[1]{\beta\widetilde{#1}}                   
\newcommand{\rr}{\mathfrak{r}}                              
\newcommand{\pp}{\mathfrak{p}}                              
\theoremstyle{plain}
\newtheorem{theorem}{Theorem}
\newtheorem{lemma}[theorem]{Lemma}
\newtheorem{corollary}[theorem]{Corollary}
\newtheorem{proposition}[theorem]{Proposition}
\newtheorem*{question*}{Question}
\theoremstyle{definition}
\newtheorem*{definition*}{Definition}
\newtheorem{example}[theorem]{Example}
\begin{document}

\title[Extreme points in Lipschitz-free spaces over compacts]{Extreme points in Lipschitz-free spaces over compact metric spaces}

\author[R. J. Aliaga]{Ram\'on J. Aliaga}
\address[R. J. Aliaga]{Instituto Universitario de Matem\'atica Pura y Aplicada,
Universitat Polit\`ecnica de Val\`encia,
Camino de Vera S/N,
46022 Valencia, Spain}
\email{raalva@upvnet.upv.es}

\date{} 


\begin{abstract}
We prove that all extreme points of the unit ball of a Lipschitz-free space over a compact metric space have finite support. Combined with previous results, this completely characterizes extreme points and implies that all of them are also extreme points in the bidual ball. For the proof, we develop some properties of an integral representation of functionals on Lipschitz spaces originally due to K. de Leeuw.
\end{abstract}


\subjclass[2020]{Primary 46B20; Secondary 46E27}

\keywords{Lipschitz-free space, extreme point, de Leeuw map}

\maketitle


\section{Introduction}

This short paper deals with the extremal structure of the unit ball of Lipschitz-free Banach spaces. Let us begin by establishing our work setting. Given a complete pointed metric space $(M,d)$, where we choose an arbitrary element $0\in M$ as a base point, we consider the Lipschitz space $\Lip_0(M)$ of all real-valued Lipschitz functions $f$ on $M$ such that $f(0)=0$. This space is a Banach space endowed with the norm $\lipnorm{f}$ given by the Lipschitz constant of $f$. More than that, $\Lip_0(M)$ is (isometrically) a dual Banach space. Its canonical predual $\lipfree{M}$ can be constructed by considering the evaluation operators $\delta(x)\colon f\mapsto f(x)$ for $x\in M$, which are obviously elements of $\dual{\Lip_0(M)}$, and then taking
\begin{equation*}
\lipfree{M}=\cl{\lspan}\,\delta(M) .
\end{equation*}
The space $\lipfree{M}$ is usually called the \emph{Lipschitz-free space} over $M$, and the weak$^\ast$ topology induced by it agrees with the topology of pointwise convergence on bounded subsets of $\Lip_0(M)$. Note also that $\delta:M\rightarrow\lipfree{M}$ is an isometric embedding.

Lipschitz and Lipschitz-free spaces are objects of intrinsic interest, as they can be regarded as the metric versions of the usual $C(K)$ spaces and their duals; see Weaver's monograph \cite{Weaver2} for an exhaustive account of their basic properties (note that Lipschitz-free spaces are referred to as \emph{Arens-Eells spaces} in it). However, their current main interest to Banach Space theory is the following linearization property: any Lipschitz mapping $M_1\rightarrow M_2$ between two metric spaces can be extended to a bounded linear operator between $\lipfree{M_1}$ and $\lipfree{M_2}$ (we identify each $M_i$ with $\delta(M_i)\subset\lipfree{M_i}$). See \cite{Godefroy_2015} for a survey of applications of this principle to nonlinear functional analysis.

The roots of the study of the extremal structure of the unit ball $\ball{\lipfree{M}}$ can be traced back to the 1990's. The first important general result was obtained by Weaver \cite{Weaver_1995}: every preserved extreme point of $\ball{\lipfree{M}}$ (i.e. such that it is also an extreme point of the bidual ball $\ball{\ddual{\lipfree{M}}}$) is an \emph{elementary molecule}, i.e. an element of $\lipfree{M}$ of the form
\begin{equation*}
\mol{pq}=\frac{\delta(p)-\delta(q)}{d(p,q)}
\end{equation*}
for some $p\neq q\in M$. As an easy consequence, all finitely supported extreme points must be elementary molecules, too. Note that elementary molecules are the canonical norming elements for $\Lip_0(M)$, which renders them natural suspects for extreme points of the unit ball (compare to the extreme points of $\ball{\dual{C(K)}}$ being multiples of the point masses). Moreover, their simplicity allows for elegant geometric characterizations of extremal elements in terms of an equivalent condition on the pair $p,q$ that only involves the metric structure of $M$.

After Weaver's early result almost no progress was made in this topic for about 20 years. The first new results to appear were due to Garc\'ia-Lirola, Petitjean, Proch\'azka and Rueda Zoca, who proved that the sets of preserved extreme points and denting points of $\ball{\lipfree{M}}$ coincide \cite{GPPR_2018}, and that an elementary molecule $\mol{pq}$ is a strongly exposed point of $\ball{\lipfree{M}}$ if and only if there is $C>0$ such that
\begin{equation*}
d(p,x)+d(q,x)-d(p,q) \geq C\cdot\min\set{d(p,x),d(q,x)}
\end{equation*}
for all $x\in M$ \cite{GPR_2018}. Shortly afterwards the author of this paper, in collaboration with E. Perneck\'a \cite{AP_2020_rmi} and A. J. Guirao \cite{AG_2019}, respectively, gave similar characterizations of elementary molecules that are extreme or preserved extreme points. Namely, $\mol{pq}$ is an extreme, resp. preserved extreme point of $\ball{\lipfree{M}}$ if and only if every point $x\neq p,q$ in $M$, resp. $\beta M$ (the Stone-\v{C}ech compactification of $M$) satisfies
\begin{equation*}
d(p,x)+d(q,x)-d(p,q)>0 .
\end{equation*}
Finally, Petitjean and Proch\'azka proved that any extreme molecule is also an exposed point of $\ball{\lipfree{M}}$; this result appeared in the joint paper \cite{APPP_2020}.

These results complete the characterization of extreme molecules in Lipschitz-free spaces, which covers all preserved extreme points and stronger notions. However, the problem of determining all extreme points of $\ball{\lipfree{M}}$ is not completely solved yet because the following question still remains open:

\begin{question*}
Must every extreme point of $\ball{\lipfree{M}}$ be an elementary molecule?
\end{question*}

Only partial answers to this question have been obtained so far. For instance, this was recently shown to be true for all $0$-hyperbolic metric spaces $M$ in \cite{APP_2021}. It has also been known since the 1990's that the question has a positive answer for compact spaces $M$ such that the space $\lip_0(M)$ of locally flat Lipschitz functions on $M$ is an isometric predual of $\lipfree{M}$ (see \cite[Corollary 4.41]{Weaver2}). Very recently, this condition has been shown to be equivalent to $\lipfree{M}$ simply being a dual space \cite[Theorem B]{AGPP_2021}. This includes compact spaces with null 1-Hausdorff measure or with a snowflaked metric.

Here we provide a positive answer to the question for all compact spaces $M$, with no additional assumptions about duality. In fact, our answer covers the slightly more general case where $M$ is a \emph{proper} space, i.e. such that every closed and bounded subset is compact.

\begin{theorem}
\label{th:extreme_proper}
Let $M$ be a proper metric space. Then the extreme points of $\ball{\lipfree{M}}$ are exactly the elementary molecules $\mol{pq}$ where $p\neq q\in M$ are such that
\begin{equation*}
d(p,x)+d(q,x)>d(p,q)
\end{equation*}
for all $x\in M\setminus\set{p,q}$. Moreover, all of them are also exposed points and preserved extreme points of $\ball{\lipfree{M}}$.
\end{theorem}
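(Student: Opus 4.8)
The plan is to reduce everything to a single assertion — that every extreme point of $\ball{\lipfree{M}}$ has finite support — and then to harvest the remaining conclusions from the results already quoted. Indeed, once an extreme point $m$ is known to be finitely supported it is an elementary molecule $m=\mol{pq}$ by Weaver's theorem (as noted in the introduction), and the extremality characterization of \cite{AP_2020_rmi} forces $d(p,x)+d(q,x)>d(p,q)$ for every $x\in M\setminus\set{p,q}$; this is exactly the stated description of the extreme points, read in the reverse direction for the converse inclusion. Such a molecule is then exposed by \cite{APPP_2020}. For the preserved-extreme conclusion I would invoke \cite{AG_2019}: it suffices to verify $d(p,x)+d(q,x)>d(p,q)$ for all $x\in\beta M\setminus\set{p,q}$, and here properness does the work — any $x\in\beta M\setminus M$ is a limit of a net leaving every bounded, hence compact, set, so $d(p,x)=d(q,x)=\infty$ and the inequality holds trivially, while the finite points are already covered by the condition over $M$. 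Thus the whole theorem rests on the finite-support statement.

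For that statement I would use de Leeuw's integral representation. Write $\widetilde M=\set{(x,y)\in M\times M:x\neq y}$ and let $\Phi\colon\Lip_0(M)\to C_b(\widetilde M)$ be the de Leeuw map $\Phi f(x,y)=(f(x)-f(y))/d(x,y)$, an isometric embedding; extending each $\Phi f$ to the Stone--\v{C}ech compactification identifies $C_b(\widetilde M)$ with $C(\bwt M)$, so the adjoint is a quotient map $\dual\Phi\colon\meas{\bwt M}\to\dual{\Lip_0(M)}$. The elementary molecule $\mol{xy}$ is exactly $\dual\Phi\delta_{(x,y)}$, the image of a Dirac mass at a genuine pair. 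Since $\ball{\lipfree{M}}=\wscl{\conv\set{\mol{xy}:(x,y)\in\widetilde M}}$, every $m\in\ball{\lipfree{M}}$ is the barycenter $m=\dual\Phi\mu$ of some positive probability measure $\mu$ on $\bwt M$; writing $m_\xi=\dual\Phi\delta_\xi$ we have $m=\int m_\xi\,d\mu(\xi)$, where $m_\xi=\mol{pq}$ when $\xi=(p,q)\in\widetilde M$, and $m_\xi$ is a \emph{singular} functional — a weak$^\ast$ limit of molecules along the diagonal or at infinity, not lying in $\lipfree{M}$ — when $\xi$ lies in the boundary $\bwt M\setminus\widetilde M$.

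Now let $m$ be an extreme point, so $\norm m=1$, and choose $\mu$ extreme among all representing probability measures. The engine is a splitting argument. If $E\subseteq\widetilde M$ has closure inside $\widetilde M$ — equivalently $E$ stays bounded away from the diagonal and from infinity — then $\mu|_E$ has compact support in $\widetilde M$, so $\dual\Phi(\mu|_E)=\int_E\mol{xy}\,d\mu$ is a Bochner integral of the norm-continuous molecule map and lies in $\lipfree{M}$; consequently its complement $\dual\Phi(\mu|_{\bwt M\setminus E})=m-\dual\Phi(\mu|_E)$ lies in $\lipfree{M}$ as well. Because $\dual\Phi$ is norm-nonincreasing and $\mu\geq0$, this exhibits $m$ as a convex combination of two elements of $\ball{\lipfree{M}}$, so extremality forces $\dual\Phi(\mu|_E)=\mu(E)\,m$ for every such $E$. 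A vector-measure uniqueness argument then yields $\mol{xy}=m$ for $\mu$-almost every $(x,y)\in\widetilde M$; since distinct pairs give distinct molecules (the support of $\mol{pq}$ is $\set{p,q}$), the restriction of $\mu$ to $\widetilde M$ is a point mass and $m=\mol{pq}$ — provided $\mu$ charges the genuine pairs at all.

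The hard part is precisely that last proviso: a priori the extreme representing measure could sit entirely on the boundary $\bwt M\setminus\widetilde M$, since even honest molecules admit boundary representations (for instance as integrals of directional-derivative functionals along the diagonal, the phenomenon responsible for $\lipfree{\RR}$ having no extreme molecules). Controlling this boundary part is where the finer properties of the de Leeuw representation, developed for proper spaces, must enter. Properness disposes of the component at infinity, because the endpoint projections $\bwt M\to\beta M\times\beta M$ cannot route boundary mass to finite pairs through infinity compatibly with membership in $\lipfree{M}$, escaping nets having diverging distances. The genuinely delicate point is the \emph{diagonal}: one must show that diagonal mass can either be transported back onto genuine pairs of $\widetilde M$ with the same barycenter or be split off within $\lipfree{M}$, so that the splitting argument still applies and $\mu$ is forced onto a single genuine pair, giving the desired finite support.
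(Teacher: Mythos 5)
Your reduction of the theorem to the finite-support claim is exactly right, and your ``engine'' (splitting a positive, norm-one representing measure and using extremality of $m$ to force each weak$^\ast$ continuous piece to be a multiple of $m$) is precisely the paper's Lemma \ref{lm:extreme_measure}. But the proof stops at the point you yourself flag as ``the hard part,'' and that part is the actual content of the theorem: nothing in your argument rules out that the chosen representing measure $\mu$ gives \emph{zero} mass to $\widetilde{M}$, in which case every set $E$ with compact closure in $\widetilde{M}$ is $\mu$-null and your splitting argument produces no information at all. This threat is real, not hypothetical: the paper's Example exhibits, for $M=[0,1]$ and $m=\delta(1)$, a positive representing measure of minimal norm supported entirely on the diagonal $\bwt{M}\setminus\widetilde{M}$, so positivity and norm-minimality (or extremality of $\mu$ among representing probability measures, which you invoke but never use) cannot by themselves push mass onto genuine pairs. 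Your two proposed remedies --- transporting diagonal mass back to $\widetilde{M}$, or splitting it off within $\lipfree{M}$ --- are stated as goals, not proved, and the first one is not what any argument in the paper achieves.

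The paper's resolution is structurally different from what you envision, and it is worth seeing why: it never shows that $\mu$ charges $\widetilde{M}$. Instead, after using properness to show $\mu$ is concentrated on $\pp^{-1}(M\times M)$ (your ``component at infinity'' step, made precise in Proposition \ref{pr:finite_concentration}), it picks a point $\zeta\in\supp(\mu)\cap\pp^{-1}(M\times M)$ and \emph{explicitly allows} $\zeta$ to be a diagonal point, i.e.\ $x=\pp_1(\zeta)=\pp_2(\zeta)=y$. The splitting is then done not by restricting $\mu$ to subsets of $\widetilde{M}$ but by multiplying by weights pulled back through the projections: $d\lambda=(h\circ\pp_1)\,d\mu$ with $h$ Lipschitz, boundedly supported, $h=1$ near $x$ and $h=0$ near an arbitrary $z\notin\set{x,y}$. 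The technical heart is Lemma \ref{lm:weighted_measure_proper}, which shows (via the Leibniz identity $\Phi(fh)=(\Phi f)(h\circ\pp_1)+(\Phi h)(f\circ\pp_2)$ and the integrability criterion of \cite[Proposition 4.4]{AP_2020_measures}) that such $\lambda$ still represents an element of $\lipfree{M}$ --- this is where properness enters essentially, and it works whether or not the mass sits on the diagonal. Combining with Lemma \ref{lm:extreme_measure} and with the support inclusion $\supp(m)\subset\pp_s(\supp(\mu))$ (Lemma \ref{lm:support_inclusion}) then yields $\supp(m)\subset\set{x,y}$ directly, sidestepping the diagonal problem rather than solving it. So the missing idea in your proposal is precisely this pair of lemmas: bound the \emph{support of $m$} by the shadow of $\supp(\mu)$, and use weighted (rather than set-restricted) splittings so that the argument survives even when $\mu$ is purely diagonal.
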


\medskip

For the rest of this paper, $M$ will denote a complete metric space with metric $d$ (which will not be proper unless specified explicitly) and we will assume without mention that an arbitrary base point $0\in M$ has been chosen. We will write $B(x,r)$ for the closed ball with center $x$ and radius $r$. The set of extreme points of a convex subset $C$ of a normed space will be denoted as $\ext C$.

By the linearization property mentioned above, the Lipschitz-free space $\lipfree{N}$ over a subset $N\subset M$ such that $0\in N$ can be identified with a closed subspace of $\lipfree{M}$, namely $\lipfree{N}=\cl{\lspan}\,\delta(N)$. Let us recall that, for $m\in\lipfree{M}$, its \emph{support} $\supp(m)$ is defined as the smallest closed subset of $M$ such that $m\in\lipfree{\supp(m)\cup\set{0}}$. The existence of such a set is proved in \cite{AP_2020_rmi,APPP_2020}.

For a compact Hausdorff space $X$, $\meas{X}$ will stand for the space of Radon measures (i.e. finite regular signed Borel measures) on $X$ with the total variation norm. Recall that, given two compact Hausdorff spaces $X,Y$, a continuous mapping $f:X\rightarrow Y$, and a measure $\mu\in\meas{X}$, the \emph{pushforward measure} $f_\sharp\mu$ is defined by
\begin{equation*}
f_\sharp\mu(E)=\mu(f^{-1}(E))
\end{equation*}
for every Borel $E\subset Y$. We have $f_\sharp\mu\in\meas{Y}$ and $\int_Y g\,d(f_\sharp\mu)=\int_X (g\circ f)\,d\mu$ for every Borel function $g$ on $Y$. Moreover, if $\mu$ is positive then $f_\sharp\mu$ is clearly also positive and $\norm{f_\sharp\mu}=\norm{\mu}$. See e.g. \cite[Sections 3.6 and 9.1]{Bogachev} for reference.

\section{Integral representation}

Our proof of Theorem \ref{th:extreme_proper} will be based on the integral representation of functionals in $\lipfree{M}$ and its bidual. In \cite{AP_2020_measures}, the author and E. Perneck\'a studied the functionals that can be expressed as integration against a measure on $M$ or some compactification of $M$. In general this is not possible for all elements of $\lipfree{M}$, only for those that can be written as the difference of two positive elements. But there is an alternative representation, originally due to de Leeuw \cite{deLeeuw_1961}, that covers all functionals in $\dual{\Lip_0(M)}$. The key idea is representing a functional not as an ``integral of evaluation functionals'', but as an ``integral of elementary molecules'' instead.

Let us introduce this representation. Following \cite{Weaver2}, denote
\begin{equation*}
\widetilde{M}=\set{(x,y)\in M\times M:x\neq y}
\end{equation*}
with the topology inherited from $M\times M$. The \emph{de Leeuw map} is the mapping $\Phi:\RR^M\rightarrow\RR^{\widetilde{M}}$ defined by
\begin{equation*}
\Phi f(x,y)=\frac{f(x)-f(y)}{d(x,y)}
\end{equation*}
for $f\in\RR^M$ and $(x,y)\in\widetilde{M}$. Clearly $\norm{\Phi f}_\infty=\lipnorm{f}$. Therefore, for any Lipschitz $f$, the function $\Phi f$ is continuous and bounded and thus admits a unique continuous extension to $\bwt{M}$, the Stone-\v{C}ech compactification of $\widetilde{M}$, that we will still denote by $\Phi f$ for simplicity. Thus we may consider $\Phi$ as a linear isometric embedding of $\Lip_0(M)$ into $C(\bwt{M})$. Its adjoint operator $\dual{\Phi}$ is therefore a quotient map from $C(\bwt{M})^{\ast}=\meas{\bwt{M}}$ onto $\dual{\Lip_0(M)}=\ddual{\lipfree{M}}$. That is, for every $\phi\in\ddual{\lipfree{M}}$ there exists a representing measure $\mu\in\meas{\bwt{M}}$ such that $\dual{\Phi}\mu=\phi$, i.e.
\begin{equation*}
\duality{f,\phi} = \int_{\bwt{M}} (\Phi f)\,d\mu
\end{equation*}
for any $f\in\Lip_0(M)$. It should be obvious that the elementary molecule $\mol{xy}$ is represented by the Dirac measure $\delta_{(x,y)}$, for any $(x,y)\in\widetilde{M}$.

The following mappings defined on $\bwt{M}$ will be used often:
\begin{itemize}
\item Let $\pp:\widetilde{M}\rightarrow M\times M$ be the identity mapping and extend it to a continuous mapping $\pp:\bwt{M}\rightarrow\beta M\times\beta M$. We will call this the \emph{projection} mapping, and denote its first and second coordinates by $\pp_1$ and $\pp_2$, respectively. For a set $E\subset\bwt{M}$ we will denote $\pp_s(E)=\pp_1(E)\cup\pp_2(E)\subset\beta M$ and call this the \emph{shadow} of $E$.
\item Let $\rr:\widetilde{M}\rightarrow\widetilde{M}$ be the \emph{reflection} mapping given by $\rr(x,y)=(y,x)$, and extend it to a continuous mapping $\rr:\bwt{M}\rightarrow\bwt{M}$. Notice that $\rr\circ\rr$ is the identity on $\widetilde{M}$ and hence on all of $\bwt{M}$, thus $\rr$ is a bijection.
\end{itemize}

The representing measure $\mu\in\meas{\bwt{M}}$ for $\phi\in\ddual{\lipfree{M}}$ is not unique. Since $\dual{\Phi}$ is a quotient map, we may always choose it such that $\norm{\mu}=\norm{\phi}$. We will now see that we may additionally choose $\mu$ to be positive. The underlying idea is that we may replace a negative point mass $-\delta_{(x,y)}$ by the positive point mass $\delta_{(y,x)}$ as both represent the same functional $\mol{yx}$. We may formalize this in terms of pushforward measures as follows.

\begin{lemma}
\label{lm:pushforward}
For any $\mu\in\meas{\bwt{M}}$ we have $\dual{\Phi}(\mu+\rr_\sharp\mu)=0$.
\end{lemma}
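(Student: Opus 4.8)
The plan is to verify the identity $\dual{\Phi}(\mu+\rr_\sharp\mu)=0$ by testing it against an arbitrary $f\in\Lip_0(M)$, since $\dual{\Phi}(\mu+\rr_\sharp\mu)$ is an element of $\ddual{\lipfree{M}}=\dual{\Lip_0(M)}$ and two such functionals coincide precisely when they agree on every $f$. Unwinding the definition of $\dual{\Phi}$, I need to show
\begin{equation*}
\int_{\bwt{M}}(\Phi f)\,d(\mu+\rr_\sharp\mu)=0
\end{equation*}
for every Lipschitz $f$ vanishing at the base point. Splitting the integral, this amounts to comparing $\int_{\bwt{M}}(\Phi f)\,d\mu$ with $\int_{\bwt{M}}(\Phi f)\,d(\rr_\sharp\mu)$.

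The next step is to apply the pushforward change-of-variables formula recalled in the excerpt, namely $\int_Y g\,d(f_\sharp\mu)=\int_X(g\circ f)\,d\mu$, with the reflection map $\rr$ playing the role of $f$ and $\Phi f$ playing the role of $g$. This rewrites
\begin{equation*}
\int_{\bwt{M}}(\Phi f)\,d(\rr_\sharp\mu)=\int_{\bwt{M}}(\Phi f)\circ\rr\,d\mu .
\end{equation*}
The crux of the argument is therefore the pointwise identity $(\Phi f)\circ\rr=-\Phi f$ on all of $\bwt{M}$. On the dense subset $\widetilde{M}$ this is immediate from the definitions: for $(x,y)\in\widetilde{M}$ we have
\begin{equation*}
(\Phi f)(\rr(x,y))=(\Phi f)(y,x)=\frac{f(y)-f(x)}{d(y,x)}=-\frac{f(x)-f(y)}{d(x,y)}=-(\Phi f)(x,y),
\end{equation*}
using that $d$ is symmetric.

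Since both $(\Phi f)\circ\rr$ and $-\Phi f$ are continuous functions on $\bwt{M}$ (the former because $\rr$ is continuous and $\Phi f$ extends continuously, the latter trivially) that agree on the dense set $\widetilde{M}$, they must coincide everywhere on $\bwt{M}$. I would spell out this density argument explicitly, as it is the one place where the passage from $\widetilde{M}$ to its Stone-\v{C}ech compactification matters. Granting the identity, the two integrals are negatives of each other, so their sum vanishes and we conclude $\duality{f,\dual{\Phi}(\mu+\rr_\sharp\mu)}=0$ for all $f$, hence $\dual{\Phi}(\mu+\rr_\sharp\mu)=0$. I expect no serious obstacle here: the only subtlety is being careful that $\Phi f$ denotes the continuous extension to $\bwt{M}$ and that the change-of-variables formula and the extension-by-density step are invoked correctly, rather than any genuine difficulty.
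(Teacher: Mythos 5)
Your proof is correct, but it takes a genuinely different route from the paper. You argue by duality: test the functional against an arbitrary $f\in\Lip_0(M)$, apply the change-of-variables formula $\int_{\bwt{M}}(\Phi f)\,d(\rr_\sharp\mu)=\int_{\bwt{M}}(\Phi f)\circ\rr\,d\mu$, and reduce everything to the antisymmetry identity $(\Phi f)\circ\rr=-\Phi f$, which you verify on $\widetilde{M}$ and extend to $\bwt{M}$ by density and continuity. The paper instead argues by density in the space of \emph{measures}: it first checks the identity on Dirac measures $\delta_{(x,y)}$ with $(x,y)\in\widetilde{M}$ (where it reads $\mol{xy}+\mol{yx}=0$), then invokes the Krein--Milman theorem and the description of $\ext\ball{\meas{\bwt{M}}}$ to write $\meas{\bwt{M}}$ as the weak$^\ast$ closed span of such Dirac measures, and concludes by weak$^\ast$ continuity of $\dual{\Phi}$. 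Your approach buys elementarity and self-containedness: it avoids Krein--Milman and the characterization of extreme points of $\ball{\meas{\bwt{M}}}$ entirely, and it makes explicit the symmetry $(\Phi f)\circ\rr=-\Phi f$ that is the structural reason the Dirac computation works in the first place. It is also slightly more complete on one technical point: the paper's density argument implicitly requires weak$^\ast$ continuity of the map $\mu\mapsto\rr_\sharp\mu$ (it is the adjoint of the composition operator $g\mapsto g\circ\rr$ on $C(\bwt{M})$, which is exactly your change-of-variables step), whereas the paper only cites the continuity of $\dual{\Phi}$. What the paper's softer argument buys is the conceptual moral, used throughout that line of research, that identities of this kind need only be checked on elementary molecules.
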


\begin{proof}
If $\mu=\delta_{(x,y)}$ for $(x,y)\in\widetilde{M}$, the identity holds as $\dual{\Phi}\delta_{(x,y)}=\mol{xy}$ and
\begin{equation*}
\dual{\Phi}(\rr_\sharp\delta_{(x,y)})=\dual{\Phi}\delta_{(y,x)}=\mol{yx}=-\mol{xy} .
\end{equation*}
Now notice that
\begin{equation*}
\meas{\bwt{M}} = \wscl{\lspan}\ext\ball{\meas{\bwt{M}}} = \wscl{\lspan}\set{\delta_\zeta:\zeta\in\bwt{M}} = \wscl{\lspan}\set{\delta_\zeta:\zeta\in\widetilde{M}} .
\end{equation*}
Indeed, the first equality follows from the Krein-Milman theorem, the second one from the well-known fact that the extreme points of $\ball{\meas{\bwt{M}}}$ are the positive and negative Dirac measures, and the last one from the defining properties of the Stone-\v{C}ech compactification $\bwt{M}$. The identity now follows for all $\mu\in\meas{\bwt{M}}$ by the weak$^\ast$ continuity of $\dual{\Phi}$.
\end{proof}

\begin{proposition}
\label{eq:norm_attaining_representation}
For every $\phi\in\ddual{\lipfree{M}}$ there is a positive $\mu\in\meas{\bwt{M}}$ such that $\dual{\Phi}\mu=\phi$ and $\norm{\mu}=\norm{\phi}$.
\end{proposition}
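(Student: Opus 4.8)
The plan is to start from an arbitrary representing measure and modify it using the reflection map $\rr$ to produce a positive measure that still represents $\phi$ while attaining the norm. Since $\dual{\Phi}$ is a quotient map, I can first fix a (possibly signed) measure $\mu_0\in\meas{\bwt{M}}$ with $\dual{\Phi}\mu_0=\phi$ and $\norm{\mu_0}=\norm{\phi}$. The goal is to convert $\mu_0$ into a positive measure without increasing its norm or changing the functional it represents. The idea flagged in the text is that a negative point mass $-\delta_{(x,y)}$ can be traded for the positive point mass $\delta_{(y,x)}$, since both represent $\mol{yx}$; Lemma \ref{lm:pushforward} is exactly the tool that makes this substitution legitimate at the level of general measures, because it says $\rr_\sharp\mu$ represents $-\phi$ whenever $\mu$ represents $\phi$.

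\textbf{First I would} take the Jordan decomposition $\mu_0=\mu_0^+-\mu_0^-$ into its positive and negative parts, so that $\norm{\mu_0}=\norm{\mu_0^+}+\norm{\mu_0^-}$. The natural candidate is then
\begin{equation*}
\mu=\mu_0^++\rr_\sharp\mu_0^-,
\end{equation*}
which is manifestly positive since both summands are. To check that it represents $\phi$, I would use Lemma \ref{lm:pushforward} applied to $\mu_0^-$: it gives $\dual{\Phi}(\mu_0^-+\rr_\sharp\mu_0^-)=0$, hence $\dual{\Phi}(\rr_\sharp\mu_0^-)=-\dual{\Phi}\mu_0^-=\dual{\Phi}(\mu_0^+)-\phi$, wait—more directly, $\dual{\Phi}(\rr_\sharp\mu_0^-)=-\dual{\Phi}(\mu_0^-)$, so by linearity $\dual{\Phi}\mu=\dual{\Phi}(\mu_0^+)-\dual{\Phi}(\mu_0^-)=\dual{\Phi}\mu_0=\phi$, as desired.

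\textbf{For the norm}, since $\mu$ is positive we have $\norm{\mu}=\mu(\bwt{M})=\mu_0^+(\bwt{M})+\rr_\sharp\mu_0^-(\bwt{M})=\norm{\mu_0^+}+\norm{\mu_0^-}=\norm{\mu_0}=\norm{\phi}$, where I use that $\rr$ is a bijection so $\rr_\sharp$ preserves total mass of positive measures (as recorded in the excerpt for pushforwards of positive measures). This already gives $\norm{\mu}=\norm{\phi}$ with no inequality to chase, because the construction only rearranges mass rather than canceling it. Conversely $\norm{\mu}\geq\norm{\dual{\Phi}\mu}=\norm{\phi}$ always holds since $\dual{\Phi}$ has norm one, so equality is forced and there is nothing delicate to estimate.

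\textbf{The main obstacle} I anticipate is essentially bookkeeping rather than a genuine difficulty: one must be careful that $\rr$ extends to a genuine Borel-measurable (indeed continuous) bijection on $\bwt{M}$ so that $\rr_\sharp$ is well-defined and mass-preserving on positive measures—but this is already guaranteed by the construction of $\rr$ in the excerpt. The only real content is verifying that the Jordan parts behave correctly under $\dual{\Phi}$, which Lemma \ref{lm:pushforward} handles cleanly, so the proof should be short.
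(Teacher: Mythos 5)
Your proposal is correct and follows essentially the same route as the paper: take a norm-attaining representing measure from the quotient map property, split it via the Jordan decomposition, and replace the negative part $\mu_0^-$ by $\rr_\sharp\mu_0^-$ using Lemma \ref{lm:pushforward}, with the same mass-preservation argument for the norm. The paper's proof is exactly this construction, so there is nothing to add.
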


\begin{proof}
Since $\dual{\Phi}$ is a quotient map, we may find $\mu$ that satisfies all requirements except possibly being positive. In order to achieve that, let $\mu=\mu^+-\mu^-$ be the Jordan decomposition of $\mu$ and let $\mu'=\mu^++\rr_\sharp\mu^-$. By Lemma \ref{lm:pushforward} we have $0=\dual{\Phi}(\mu^-+\rr_\sharp\mu^-)$, therefore
\begin{equation*}
\dual{\Phi}(\mu') = \dual{\Phi}\mu^++\dual{\Phi}(\rr_\sharp\mu^-) = \dual{\Phi}\mu^+-\dual{\Phi}\mu^- = \dual{\Phi}\mu
\end{equation*}
as we needed. It is clear that $\mu'\geq 0$ and
\begin{equation*}
\norm{\mu'}=\norm{\mu^+}+\norm{\rr_\sharp\mu^-}=\norm{\mu^+}+\norm{\mu^-}=\norm{\mu} . \qedhere
\end{equation*}
\end{proof}

In general, measures $\mu\in\meas{\bwt{M}}$ represent functionals $\dual{\Phi}\mu$ that belong to $\ddual{\lipfree{M}}$. In our argument, we will need to determine whether a given measure $\mu$ represents a weak$^\ast$ continuous functional on $\Lip_0(M)$. This is usually not an easy task. A natural sufficient condition is given by the following result:

\begin{proposition}[{\cite[Lemma 4.36]{Weaver2}}]
\label{pr:measure_in_fm}
If $\mu\in\meas{\bwt{M}}$ is concentrated on $\widetilde{M}$ then $\dual{\Phi}\mu\in\lipfree{M}$.
\end{proposition}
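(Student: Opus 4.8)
The plan is to exploit the idea, already emphasized in the excerpt, that $\dual{\Phi}\mu$ is an ``integral of elementary molecules'': if $\mu$ lives on $\widetilde{M}$ itself, then an honest molecule $\mol{xy}\in\lipfree{M}$ is attached to each point $(x,y)$ of the support, and integrating genuine elements of $\lipfree{M}$ ought to keep us inside $\lipfree{M}$. The cleanest first step is to make this rigorous for a measure $\mu$ concentrated on a \emph{compact} subset $K\subset\widetilde{M}$. I would first check that the assignment $m\colon(x,y)\mapsto\mol{xy}$ is a continuous, bounded map from $\widetilde{M}$ into the Banach space $\lipfree{M}$: boundedness holds since $\norm{\mol{xy}}=1$, and continuity follows from the continuity of $\delta\colon M\to\lipfree{M}$ together with the continuity and strict positivity of $d$ on $\widetilde{M}$. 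Restricted to the compact metric space $K$, the map $m$ is therefore Bochner integrable against the finite measure $\abs{\mu}$, and its integral $\int_K m\,d\mu$ lies in $\lipfree{M}$ because $\lipfree{M}$ is a closed subspace containing all values of $m$.

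Next I would identify this vector integral with $\dual{\Phi}\mu$. Pairing with an arbitrary $f\in\Lip_0(M)$, using that bounded functionals commute with the Bochner integral, and invoking the identity $\duality{f,\mol{xy}}=\Phi f(x,y)$, I obtain
\begin{equation*}
\duality{f,\int_K m\,d\mu}=\int_K \Phi f\,d\mu=\int_{\bwt{M}}\Phi f\,d\mu=\duality{f,\dual{\Phi}\mu},
\end{equation*}
where the middle equality uses that $\mu$ is concentrated on $K$. Since $f$ was arbitrary, $\dual{\Phi}\mu=\int_K m\,d\mu\in\lipfree{M}$, settling the compactly supported case.

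Finally, I would remove the compactness assumption by a norm-approximation argument. Using that $\abs{\mu}$ is a finite Radon measure concentrated on $\widetilde{M}$, inner regularity provides an increasing sequence of compact sets $K_n\subset\widetilde{M}$ with $\abs{\mu}(\bwt{M}\setminus K_n)\to 0$. The restrictions $\mu_n=\mu|_{K_n}$ are concentrated on the compacts $K_n\subset\widetilde{M}$, so $\dual{\Phi}\mu_n\in\lipfree{M}$ by the previous steps; and since $\dual{\Phi}$ has norm $1$ (being the adjoint of an isometry), we get $\norm{\dual{\Phi}\mu-\dual{\Phi}\mu_n}\le\norm{\mu-\mu_n}=\abs{\mu}(\bwt{M}\setminus K_n)\to 0$. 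As $\lipfree{M}$ is norm-closed in $\ddual{\lipfree{M}}$, this forces $\dual{\Phi}\mu\in\lipfree{M}$. The step I expect to require the most care is the compact case: one must ensure that the molecule map is genuinely $\lipfree{M}$-valued and continuous (which is precisely where compactness of $K$, keeping $d$ bounded away from $0$, enters) and that the scalar pairing may be pulled inside the vector integral. The passage from compact to general $\mu$ is then a routine density argument.
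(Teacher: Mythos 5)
Your proof is correct, and it takes a genuinely different route from the paper's in the key step. Both arguments share the same outer reduction: by inner regularity of the Radon measure, almost all of $\abs{\mu}$ sits on compact sets $K\subset\widetilde{M}$, and since $\norm{\dual{\Phi}}=1$ and $\lipfree{M}$ is norm-closed in $\ddual{\lipfree{M}}$, it suffices to handle measures concentrated on such a $K$. The difference is how that core case is settled. The paper proves weak$^\ast$ continuity of $\dual{\Phi}(\mu|_K)$ directly: it invokes the Banach--Dieudonn\'e theorem to reduce to the unit ball of $\Lip_0(M)$, passes to the compact (hence separable) shadow $S=\pp_s(K)$, where $\ball{\Lip_0(S)}$ is weak$^\ast$ metrizable so that nets can be replaced by sequences, and concludes with Lebesgue's dominated convergence theorem. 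You instead realize the functional concretely as a Bochner integral of the norm-continuous molecule map $(x,y)\mapsto\mol{xy}$ over $K$, and identify it with $\dual{\Phi}(\mu|_K)$ by pairing against each $f\in\Lip_0(M)$; membership in $\lipfree{M}$ then comes for free because Bochner integrals of functions valued in a closed subspace stay in that subspace. Your approach trades scalar convergence theorems for vector integration and is arguably more conceptual: it makes the heuristic ``$\dual{\Phi}\mu$ is an integral of molecules'' literal, and yields the estimate $\norm{\dual{\Phi}\mu}\leq\norm{\mu}$ as a byproduct, whereas the paper's argument stays entirely scalar-valued and establishes weak$^\ast$ continuity, the defining property of membership in the predual. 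One small correction of emphasis: the role of compactness of $K$ is not really to keep $d$ bounded away from zero (norm-continuity of the molecule map needs only $d>0$, which holds on all of $\widetilde{M}$); what compactness actually buys is that $K$, as a compact subset of the metrizable space $M\times M$, is metrizable and the image of $K$ under the molecule map is norm-compact, hence separable, which is exactly what makes the map strongly measurable and therefore Bochner integrable.
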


\noindent Note that this result is stated in \cite{Weaver2} under an additional hypothesis of local compactness, but the proof does not use this extra hypothesis at all and is valid for the general case. For completeness, we provide below a slightly simplified proof of this result. Note also that $\widetilde{M}$ is a Borel subset of $\bwt{M}$ when $M$ is complete, in fact it is a $G_\delta$ set. Indeed, $\widetilde{M}=\pp^{-1}(M\times M)\setminus d^{-1}(0)$ where $d:\bwt{M}\rightarrow [0,\infty]$ is the continuous extension of the metric, and $M$ is well-known to be a $G_\delta$ subset of $\beta M$ when complete.

\begin{proof}[Proof of Proposition \ref{pr:measure_in_fm}]
We will show that $\dual{\Phi}\mu$ is weak$^\ast$ continuous. By the Banach-Dieudonn\'e theorem, it is enough to check that its restriction to $\ball{\Lip_0(M)}$ is weak$^\ast$ continuous. That is: fix a net $(f_i)$ in $\ball{\Lip_0(M)}$ that converges weak$^\ast$, i.e. pointwise, to $f\in\ball{\Lip_0(M)}$, then we need to show that $\duality{f_i,\dual{\Phi}\mu}\rightarrow\duality{f,\dual{\Phi}\mu}$.

Suppose first that $M$ is separable. Then $\ball{\Lip_0(M)}$ is weak$^\ast$ metrizable, so it is enough to prove the result for a sequence $(f_n)$ instead of a net. In that case we have $\Phi f_n\rightarrow\Phi f$ pointwise on $\widetilde{M}$, and $\abs{\Phi f_n}\leq 1$ which is $\mu$-integrable, hence
\begin{equation*}
\lim_{n\rightarrow\infty}\duality{f_n,\dual{\Phi}\mu} = \lim_{n\rightarrow\infty}\int_{\widetilde{M}}(\Phi f_n)\,d\mu = \int_{\widetilde{M}}(\Phi f)\,d\mu = \duality{f,\dual{\Phi}\mu}
\end{equation*}
by Lebesgue's dominated convergence theorem.

For the general case, fix $\varepsilon>0$. By regularity there is a compact set $K\subset\widetilde{M}$ such that $\norm{\mu-\mu|_K}=\abs{\mu-\mu|_K}(\widetilde{M})\leq\varepsilon$, and so $\norm{\dual{\Phi}\mu-\dual{\Phi}(\mu|_K)}\leq\varepsilon$. Then $S=\pp_s(K)$ is a compact subset of $M$, hence separable. Since $K\subset\widetilde{S}$ we may identify $\mu|_K$ with a Radon measure on $\widetilde{S}$. Obviously $f_i|_S\rightarrow f|_S$ pointwise, therefore
\begin{equation*}
\lim_i\duality{f_i,\dual{\Phi}(\mu|_K)} = \lim_i\int_{\widetilde{S}}(\Phi f_i)\,d\mu|_K = \int_{\widetilde{S}}(\Phi f)\,d\mu|_K = \duality{f,\dual{\Phi}(\mu|_K)}
\end{equation*}
where the second equality holds by the previous paragraph. Thus $\dual{\Phi}(\mu|_K)\in\lipfree{M}$ and so $\dist(\dual{\Phi}\mu,\lipfree{M})\leq\varepsilon$. Letting $\varepsilon\rightarrow 0$ yields $\dual{\Phi}\mu\in\lipfree{M}$.
\end{proof}

The converse of Proposition \ref{pr:measure_in_fm} is false. For instance, take any $\zeta\in\bwt{M}\setminus\widetilde{M}$, then $\mu$ and $\mu+\delta_\zeta+\delta_{\rr(\zeta)}$ represent the same functional by Lemma \ref{lm:pushforward}. But even if we avoid this trick by restricting our attention to measures with minimal norm, i.e. satisfying the conditions in Proposition \ref{eq:norm_attaining_representation}, we can still find counterexamples to Proposition \ref{pr:measure_in_fm}:

\begin{example}
Consider $M=[0,1]$ and $m=\delta(1)\in\lipfree{M}$. For every $n\in\NN$, let
\begin{equation*}
\mu_n=\frac{1}{n}\sum_{k=1}^n \delta_{\pare{\frac{k}{n},\frac{k-1}{n}}}\in\meas{\bwt{M}} .
\end{equation*}
It should be clear that $\norm{\mu_n}=1$ and $\dual{\Phi}\mu_n=m$ for every $n$. Now let $\mu$ be a weak$^\ast$ cluster point of the sequence $(\mu_n)$. Then $\dual{\Phi}\mu=m$, but $\supp(\mu)$ is clearly contained in the ``diagonal'' $\bwt{M}\setminus\widetilde{M}$.
\end{example}

While it is in general not possible to guarantee that a measure $\mu$ representing an element of $\lipfree{M}$ will be concentrated on $\widetilde{M}$, we can at least prove that it will be concentrated ``away from infinity''. More specifically, for proper $M$ we can prove that $\mu$ is concentrated on $\pp^{-1}(M\times M)$. In the proof we will require the following simple fact.

\begin{lemma}
\label{lm:phi_bounded_support}
Suppose that $M$ is proper and let $h$ be a Lipschitz function on $M$ with bounded support. Then $\Phi h$ vanishes outside of $\pp^{-1}(M\times M)$.
\end{lemma}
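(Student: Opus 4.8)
The plan is to compute $\Phi h$ at a point $\zeta\in\bwt{M}$ with $\pp(\zeta)\notin M\times M$ by approximating $\zeta$ from the dense subset $\widetilde{M}$ and exploiting that $h$ vanishes far away from the base point. First I would fix $R>0$ with $\supp(h)\subset B(0,R)$; since $M$ is proper, $B(0,R)$ is compact, so $h$ is bounded (being continuous on a compact set and zero outside it) and $h$ vanishes on $M\setminus B(0,R)$. The whole argument then reduces to controlling the quotient $\Phi h(x,y)=(h(x)-h(y))/d(x,y)$ along a net $(x_i,y_i)\to\zeta$, knowing that whichever coordinate of $\pp(\zeta)$ lies in $\beta M\setminus M$ forces the corresponding net to leave every ball.

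The key step is the following observation about $\beta M$: if $M$ is proper and a net $(z_i)$ in $M$ converges in $\beta M$ to some $\xi\in\beta M\setminus M$, then $d(0,z_i)\to\infty$. I would prove this by contraposition. If $d(0,z_i)\not\to\infty$, there is $R'>0$ with $z_i\in B(0,R')$ cofinally, which yields a subnet contained in $B(0,R')$; this ball is compact by properness, hence closed in $\beta M$, so the limit $\xi$ of the subnet lies in $B(0,R')\subset M$, contradicting $\xi\notin M$. This is the one place where properness is genuinely used, and it is what I expect to be the main (if modest) obstacle, as it is the bridge between the metric hypothesis and the topology of the compactification.

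Finally, given $\zeta$ with $\pp(\zeta)\notin M\times M$, I would choose a net $(x_i,y_i)$ in $\widetilde{M}$ converging to $\zeta$ (possible since $\widetilde{M}$ is dense in $\bwt{M}$). Continuity of $\pp$ gives $x_i\to\pp_1(\zeta)$ and $y_i\to\pp_2(\zeta)$ in $\beta M$, and continuity of the extension gives $\Phi h(\zeta)=\lim_i\Phi h(x_i,y_i)$. At least one of $\pp_1(\zeta),\pp_2(\zeta)$ lies outside $M$, so by the previous step the corresponding net escapes to infinity. If both escape, then eventually $h(x_i)=h(y_i)=0$, hence $\Phi h(x_i,y_i)=0$. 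If only one does, say $x_i$ (so $y_i\to y_0\in M$), then eventually $h(x_i)=0$, while $d(x_i,y_i)\geq d(0,x_i)-d(0,y_i)\to\infty$ and $h(y_i)\to h(y_0)$ is finite, so $\Phi h(x_i,y_i)=-h(y_i)/d(x_i,y_i)\to0$; the remaining subcase is identical after swapping coordinates (or by the symmetry $\Phi h\circ\rr=-\Phi h$). In every case $\Phi h(\zeta)=0$, which is exactly the claim.
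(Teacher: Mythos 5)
Your proof is correct and takes essentially the same approach as the paper's: approximate $\zeta$ by a net from $\widetilde{M}$, use properness (closed balls are compact, hence closed in $\beta M$) to show that any net converging to a point of $\beta M\setminus M$ escapes every bounded set, and then bound the difference quotient. The only difference is cosmetic — the paper passes to a subnet along which $d(x_i,y_i)$ tends to infinity or to a finite limit, whereas you split according to which coordinates of $\pp(\zeta)$ lie outside $M$ — but the key properness fact and the estimates are the same.
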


\begin{proof}
Let $\zeta\in\bwt{M}\setminus\pp^{-1}(M\times M)$, choose a net $(x_i,y_i)$ in $\widetilde{M}$ that converges to $\zeta$, and select a subnet such that $d(x_i,y_i)$ either tends to infinity or converges to a finite value. In the former case $\abs{\Phi h(x_i,y_i)}\leq 2\norm{h}_\infty/d(x_i,y_i)$ converges to $0$ and thus $\Phi h(\zeta)=0$. In the latter case, notice that either both $\pp_1(\zeta)$ and $\pp_2(\zeta)$ belong to $M$, or neither of them does. The first case implies $\zeta\in\pp^{-1}(M\times M)$, whereas the second case implies $h(x_i)=h(y_i)=0$ eventually as $h$ vanishes outside of a bounded set, and so $\Phi h(\zeta)=0$.
\end{proof}

\begin{proposition}
\label{pr:finite_concentration}
Suppose that $M$ is proper and $\mu\in\meas{\bwt{M}}$ is such that $\dual{\Phi}\mu\in\lipfree{M}$, $\norm{\dual{\Phi}\mu}=\norm{\mu}$, and $\mu\geq 0$. Then $\mu$ is concentrated on $\pp^{-1}(M\times M)$.
\end{proposition}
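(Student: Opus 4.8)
The plan is to split $\mu$ into its ``finite'' and ``infinite'' parts relative to $A:=\pp^{-1}(M\times M)$, which is a Borel subset of $\bwt{M}$, and to show that the infinite part $\mu|_{\bwt{M}\setminus A}$ contributes nothing. Concretely, I would first prove that $\dual{\Phi}(\mu|_{\bwt{M}\setminus A})=0$, and then close the argument by a norm count: if this holds then $\dual{\Phi}\mu=\dual{\Phi}(\mu|_A)$, so
\begin{equation*}
\norm{\mu}=\norm{\dual{\Phi}\mu}=\norm{\dual{\Phi}(\mu|_A)}\leq\norm{\mu|_A}=\mu(A)\leq\norm{\mu} ,
\end{equation*}
where the first equality is the norm-minimality hypothesis and the last two equalities use $\mu\geq 0$. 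This forces $\mu(A)=\norm{\mu}$, i.e. $\mu(\bwt{M}\setminus A)=0$, which is exactly the assertion. Note that all three hypotheses are genuinely needed: positivity and minimality enter only in this final count, while weak$^\ast$ continuity of $\dual{\Phi}\mu$ is what drives the vanishing below.

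The heart of the matter is therefore to show that $\int_{\bwt{M}\setminus A}(\Phi f)\,d\mu=0$ for every $f\in\Lip_0(M)$; we may assume $\lipnorm{f}\leq 1$. The idea is to test against truncations of $f$ with bounded support and exploit Lemma \ref{lm:phi_bounded_support}. For $R>0$ set $\rho_R=(2R-d(0,\cdot))^+$, which is $1$-Lipschitz and vanishes off $B(0,2R)$, and define $f_R=\max\set{\min\set{f,\rho_R},-\rho_R}$. Then $f_R\in\Lip_0(M)$ satisfies $\lipnorm{f_R}\leq 1$ and has bounded support; moreover $f_R$ agrees with $f$ on $B(0,R)$ (because there $\abs{f}\leq d(0,\cdot)\leq R\leq\rho_R$), and hence $f_R\to f$ pointwise as $R\to\infty$. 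By Lemma \ref{lm:phi_bounded_support}, $\Phi f_R$ vanishes on $\bwt{M}\setminus A$, so for every $R$
\begin{equation*}
\duality{f-f_R,\dual{\Phi}\mu}=\int_{A}\Phi(f-f_R)\,d\mu+\int_{\bwt{M}\setminus A}(\Phi f)\,d\mu ,
\end{equation*}
where the second integral no longer depends on $R$.

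Now I would let $R\to\infty$. The left-hand side tends to $0$: since $\dual{\Phi}\mu\in\lipfree{M}$ and the net $(f-f_R)$ is bounded in $\Lip_0(M)$ and converges pointwise to $0$, weak$^\ast$ continuity applies. The first integral on the right also tends to $0$ by dominated convergence, the integrands being bounded by $\lipnorm{f-f_R}\leq 2$, provided one checks that $\Phi(f-f_R)\to 0$ pointwise on $A$. This last point is the step requiring care, since $A$ contains points of $\bwt{M}\setminus\widetilde{M}$ at which $\Phi$ is defined only by continuous extension. For $\zeta\in A$ with $\pp(\zeta)=(a,b)\in M\times M$ and any net $(x_i,y_i)\to\zeta$, one has $x_i\to a$ and $y_i\to b$ in $M$, so once $R>\max\set{d(0,a),d(0,b)}$ the points $x_i,y_i$ eventually lie in $B(0,R)$ and thus $\Phi(f-f_R)(x_i,y_i)=0$; hence $\Phi(f-f_R)(\zeta)=0$ for all large $R$. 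Combining the three limits yields $\int_{\bwt{M}\setminus A}(\Phi f)\,d\mu=0$ for every $f$, that is $\dual{\Phi}(\mu|_{\bwt{M}\setminus A})=0$, and the norm count of the first paragraph completes the proof.

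I expect the main obstacle to be precisely this verification that $\Phi(f-f_R)$ vanishes pointwise on all of $A$, including on the ``diagonal'' and the other boundary points of $A\setminus\widetilde{M}$; once the truncation $f_R$ is set up so that it coincides with $f$ on a large ball, everything else is bookkeeping. A secondary subtlety worth flagging is that one should \emph{not} attempt to prove $\dual{\Phi}(\mu|_A)$ weak$^\ast$ continuous and subtract it — it need not be, as the measure may legitimately carry ``derivative-type'' mass on the finite diagonal (cf.\ the Example) — which is exactly why the $\mu|_A$ term must be handled by dominated convergence rather than by weak$^\ast$ continuity.
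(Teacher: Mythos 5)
Your proof is correct and takes essentially the same route as the paper's: both truncate $f$ to a bounded-support approximation, invoke Lemma \ref{lm:phi_bounded_support} to make the truncated function vanish off $\pp^{-1}(M\times M)$, pass to the limit using the weak$^\ast$ continuity of $\dual{\Phi}\mu$ together with dominated convergence (verified pointwise on $\pp^{-1}(M\times M)$ by the same net argument), and finish with the identical positivity-plus-norm-minimality count. The only difference is the truncation device: the paper multiplies by plateau functions $H_n$, citing an external lemma for the bound $\lipnorm{fH_n}\leq 3\lipnorm{f}$ and the weak$^\ast$ convergence $fH_n\wsconv f$, whereas your lattice truncation $f_R=\max\set{\min\set{f,\rho_R},-\rho_R}$ is self-contained and yields $\lipnorm{f_R}\leq 1$ directly.
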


\begin{proof}
For $n\in\NN$, let $H_n$ be the function defined by
\begin{equation*}
H_n(x) = \begin{cases}
1 &\text{, if } d(x,0)\leq 2^n \\
2-2^{-n}d(x,0) &\text{, if } 2^n\leq d(x,0)\leq 2^{n+1} \\
0 &\text{, if } 2^{n+1}\leq d(x,0)
\end{cases}
\end{equation*}
for $x\in M$. By \cite[Lemma 2.3]{APPP_2020}, for every $f\in\Lip_0(M)$ we have $fH_n\in\Lip_0(M)$ with $\lipnorm{fH_n}\leq 3\lipnorm{f}$ and therefore $fH_n\wsconv f$. It follows that
\begin{equation}
\label{eq:limit_Hn}
\int_{\bwt{M}} (\Phi f)\,d\mu = \lim_{n\rightarrow\infty} \int_{\bwt{M}} \Phi(fH_n)\,d\mu = \lim_{n\rightarrow\infty} \int_S \Phi(fH_n)\,d\mu
\end{equation}
where $S=\pp^{-1}(M\times M)$. Indeed, the first equality holds because $\dual{\Phi}\mu$ is weak$^\ast$ continuous and the second one is a consequence of Lemma \ref{lm:phi_bounded_support} as each $fH_n$ has bounded support.

We now claim that $\Phi(fH_n)\rightarrow\Phi f$ pointwise on $S$. To see this, fix $\zeta\in S$ and let $N\in\NN$ be such that $\pp_1(\zeta),\pp_2(\zeta)\in B(0,2^N)$. Then, for $n>N$, we have $H_n=1$ on neighborhoods of $\pp_1(\zeta)$ and $\pp_2(\zeta)$ . Thus, if $(x_i,y_i)$ is a net in $\widetilde{M}$ that converges to $\zeta$, we have $x_i\rightarrow\pp_1(\zeta)$, $y_i\rightarrow\pp_2(\zeta)$ and so
\begin{align*}
\Phi(fH_n)(\zeta) = \lim_i\Phi(fH_n)(x_i,y_i) &= \lim_i\frac{f(x_i)H_n(x_i)-f(y_i)H_n(y_i)}{d(x_i,y_i)} \\
&= \lim_i\frac{f(x_i)-f(y_i)}{d(x_i,y_i)} = \Phi f(\zeta)
\end{align*}
therefore $\Phi(fH_n)(\zeta)=\Phi f(\zeta)$. Lebesgue's dominated convergence theorem then implies that the limit \eqref{eq:limit_Hn} equals $\int_S (\Phi f)\,d\mu$. We conclude that $\mu$ and $\mu|_S$ represent the same functional in $\lipfree{M}$. But then $\norm{\mu}=\norm{\dual{\Phi}\mu}=\norm{\dual{\Phi}(\mu|_S)}\leq\norm{\mu|_S}$, and so $\mu$ must be concentrated on $S$.
\end{proof}

We will also need some understanding of the relationship between the support of $\mu$ and the support of the represented functional when the latter belongs to $\lipfree{M}$.

\begin{lemma}
\label{lm:support_inclusion}
Let $m\in\lipfree{M}$ and $\mu\in\meas{\bwt{M}}$ be such that $\dual{\Phi}\mu=m$. Then $\supp(m)\subset\pp_s(\supp(\mu))$.
\end{lemma}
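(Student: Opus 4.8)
The plan is to argue by contraposition: I will show that every point $x\in M$ lying outside $K:=\pp_s(\supp(\mu))$ also lies outside $\supp(m)$. First I note that $\supp(\mu)$ is a compact subset of $\bwt{M}$, so by continuity of $\pp_1$ and $\pp_2$ its shadow $K=\pp_1(\supp(\mu))\cup\pp_2(\supp(\mu))$ is a compact, hence closed, subset of $\beta M$. Fixing $x\in M\setminus K$ and using that $\beta M$ is compact Hausdorff (thus regular), I choose an open set $V\subset\beta M$ with $x\in V$ and $\cl{V}\cap K=\emptyset$. Then $U:=V\cap M$ is an open neighborhood of $x$ in $M$, so there is $r>0$ with $B(x,r)\subset U\subset\cl{V}$.

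The key step is to show that $\Phi f$ vanishes on $\supp(\mu)$ for every $f\in\Lip_0(M)$ that vanishes outside $B(x,r)$. Let $\zeta\in\supp(\mu)$ and write $a=\pp_1(\zeta)$, $b=\pp_2(\zeta)$; both belong to $K$ and hence to the open set $W:=\beta M\setminus\cl{V}$. Choosing a net $(x_i,y_i)$ in $\widetilde{M}$ converging to $\zeta$, continuity of the extended projections gives $x_i\to a$ and $y_i\to b$ in $\beta M$; since $a,b\in W$ and $W$ is open, eventually both $x_i,y_i\in W\cap M=M\setminus\cl{V}\subset M\setminus B(x,r)$. For such indices $f(x_i)=f(y_i)=0$, so $\Phi f(x_i,y_i)=0$, and continuity of $\Phi f$ on $\bwt{M}$ forces $\Phi f(\zeta)=\lim_i\Phi f(x_i,y_i)=0$.

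Since $\mu$ is concentrated on $\supp(\mu)$ and $\Phi f$ is bounded and Borel, it follows that
\begin{equation*}
\duality{f,m}=\duality{f,\dual{\Phi}\mu}=\int_{\bwt{M}}(\Phi f)\,d\mu=0
\end{equation*}
for every $f\in\Lip_0(M)$ vanishing outside $B(x,r)$. By the characterization of supports in $\lipfree{M}$ from \cite{AP_2020_rmi,APPP_2020} — namely that $x\in\supp(m)$ precisely when every neighborhood of $x$ admits some $f\in\Lip_0(M)$ vanishing outside it with $\duality{f,m}\neq0$ — the neighborhood $B(x,r)$ witnesses $x\notin\supp(m)$, which completes the contrapositive and yields $\supp(m)\subset\pp_s(\supp(\mu))$. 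I expect the main obstacle to be the net argument of the second paragraph: one must convert convergence in $\bwt{M}$ into eventual membership of the approximating points in the metric ball $B(x,r)$, which is precisely why the stronger separation $\cl{V}\cap K=\emptyset$ (rather than merely $x\notin K$) is required, and one must invoke the correct support characterization to translate the vanishing of all these pairings into $x\notin\supp(m)$.
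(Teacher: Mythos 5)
Your proof is correct and follows essentially the same route as the paper: the contrapositive via the support characterization of \cite{APPP_2020}, separating $x$ from the compact shadow $\pp_s(\supp(\mu))$ in $\beta M$, and showing $\Phi f$ vanishes on $\supp(\mu)$ for every $f$ supported in a small ball around $x$, whence $\duality{f,m}=0$. The only cosmetic differences are that you obtain the separation from regularity of $\beta M$ (an open $V$ with $\cl{V}\cap\pp_s(\supp(\mu))=\varnothing$) where the paper shrinks closures of balls $\cl{B(x,r)}^{\beta M}$ by compactness, and you phrase the vanishing of $\Phi f$ at points of $\supp(\mu)$ via convergent nets rather than via density of $\widetilde{M}$ in a neighborhood disjoint from $\pp_1^{-1}(K)\cup\pp_2^{-1}(K)$; both are equivalent formulations of the same argument.
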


\begin{proof}
Recall that by \cite[Proposition 2.7]{APPP_2020} a point $x\in M$ belongs to $\supp(m)$ if and only if for every $r>0$ there exists a function $f\in\Lip_0(M)$ that is supported on $B(x,r)$ and such that $\duality{m,f}\neq 0$. We will show that no point $x\in M\setminus\pp_s(\supp(\mu))$ satisfies that condition, and this will prove the lemma.

Indeed, fix one such $x$. Notice that, since $\supp(\mu)$ is a compact subset of $\bwt{M}$, $\pp_s(\supp(\mu))$ is also a compact subset of $\beta M$. Therefore there is $r>0$ such that $K=\cl{B(x,r)}^{\beta M}$ does not intersect $\pp_s(\supp(\mu))$. Suppose that $f\in\Lip_0(M)$ is supported in $B(x,r)$ and let
\begin{equation*}
W = \pp_1^{-1}(K) \cup \pp_2^{-1}(K)
\end{equation*}
which is a compact subset of $\bwt{M}$ that does not intersect $\supp(\mu)$. If $\zeta\in\supp(\mu)$ then there is a neighborhood $U$ of $\zeta$ such that $U\cap W=\varnothing$. Hence, if $(x,y)\in\widetilde{M}\cap U$ then $x,y\notin B(x,r)$ and $f(x)=f(y)=0$, and it follows by density that $\Phi f(\zeta)=0$. This shows that $\Phi f=0$ on $\supp(\mu)$ and therefore $\duality{m,f}=\int_{\bwt{M}}(\Phi f)\,d\mu=0$. As explained above, this proves that $x\notin\supp(m)$.
\end{proof}

Since the shadow of $\supp(\mu)$ is a compact subset of $\beta M$, by Lemma \ref{lm:support_inclusion} it must always contain the closure $\cl{\supp(m)}^{\beta M}$. It is easy to see that, in fact, we can always find a representing measure whose support has minimal shadow. Note that it might be necessary to include the base point in this minimal shadow, as $0$ will not belong to $\supp(m)$ if it is isolated in $\pp_s(\supp(\mu))$.

\begin{proposition}
\label{pr:minimal_support_representation}
For every $m\in\lipfree{M}$ there is a positive $\mu\in\meas{\bwt{M}}$ such that $\dual{\Phi}\mu=m$, $\norm{\mu}=\norm{m}$, and
\begin{equation*}
\cl{\supp(m)}^{\beta M} \subset \pp_s(\supp(\mu)) \subset \cl{\supp(m)}^{\beta M}\cup\set{0} .
\end{equation*}
\end{proposition}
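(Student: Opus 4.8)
The lower inclusion $\cl{\supp(m)}^{\beta M}\subset\pp_s(\supp(\mu))$ holds for \emph{every} representing measure, by Lemma \ref{lm:support_inclusion} together with the compactness of the shadow, exactly as remarked just before the statement. So the whole content is the upper inclusion, i.e.\ producing a representing measure whose support does not ``leak'' outside $\cl{\supp(m)}^{\beta M}\cup\set{0}$. The plan is to build $\mu$ not on $\bwt{M}$ directly, but on the smaller space $\bwt{N}$ attached to $N:=\supp(m)\cup\set{0}$, and then transport it to $\bwt{M}$ by a pushforward that by construction keeps the support in the right place.

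First I would record that $N$ is closed in $M$, hence complete, contains the base point, and satisfies $m\in\lipfree{N}$ with $\lipfree{N}$ sitting isometrically inside $\lipfree{M}$. Applying Proposition \ref{eq:norm_attaining_representation} \emph{inside} $\lipfree{N}$ furnishes a positive $\nu\in\meas{\bwt{N}}$ representing $m$ over $N$ with $\norm{\nu}=\norm{m}$. The inclusion $\widetilde{N}\hookrightarrow\widetilde{M}\hookrightarrow\bwt{M}$ is a continuous map into a compact Hausdorff space, so by the universal property of the Stone-\v{C}ech compactification it extends uniquely to a continuous $\iota:\bwt{N}\rightarrow\bwt{M}$, and I take $\mu=\iota_\sharp\nu$. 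This $\mu$ is automatically positive and, since pushforward preserves the total mass of a positive measure, $\norm{\mu}=\norm{\nu}=\norm{m}$.

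The second step is to verify $\dual{\Phi}\mu=m$ in $\lipfree{M}$. The key is the compatibility of the two de Leeuw maps: for $f\in\Lip_0(M)$, both $(\Phi f)\circ\iota$ and the de Leeuw image $\Phi_N(f|_N)$ taken over $N$ are continuous on $\bwt{N}$ and agree on the dense set $\widetilde{N}$ (the metric on $N$ being the restriction), hence coincide everywhere. Using the change-of-variables formula for the pushforward and the fact that the adjoint of the embedding $\lipfree{N}\hookrightarrow\lipfree{M}$ is the restriction map $f\mapsto f|_N$, I obtain
$\duality{f,\dual{\Phi}\mu}=\int_{\bwt{N}}\Phi_N(f|_N)\,d\nu=\duality{f|_N,m}=\duality{f,m}$
for every $f$, so $\dual{\Phi}\mu=m$.

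Finally I would control the shadow. Because $\iota(\bwt{N})$ is compact, hence closed in $\bwt{M}$, the measure $\mu=\iota_\sharp\nu$ is concentrated on it, giving $\pp_s(\supp(\mu))\subset\pp_s(\iota(\bwt{N}))$. A second density-and-continuity argument identifies the projection over $M$ composed with $\iota$ as $(j\times j)$ composed with the projection over $N$, where $j:\beta N\rightarrow\beta M$ is the canonical extension of the inclusion; thus $\pp_s(\iota(\bwt{N}))\subset j(\beta N)$. The one point requiring care — though it is standard topology — is the identification $j(\beta N)=\cl{N}^{\beta M}$, which holds since $j(\beta N)$ is compact, contains $N$, and lies in $\cl{N}^{\beta M}$ by continuity. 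As $\cl{N}^{\beta M}=\cl{\supp(m)}^{\beta M}\cup\set{0}$ (closure distributes over the finite union, and $\set{0}$ is closed in the Hausdorff space $\beta M$), this yields the upper inclusion and completes the proof. I expect the main obstacle to be the bookkeeping of these two compatibility relations linking the constructions over $N$ and over $M$ through $\iota$, rather than any single hard estimate.
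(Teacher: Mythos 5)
Your proof is correct and follows essentially the same route as the paper: restrict to $N=\supp(m)\cup\set{0}$, apply Proposition \ref{eq:norm_attaining_representation} over $N$, transport the resulting positive norm-attaining measure into $\meas{\bwt{M}}$, and get the lower inclusion from Lemma \ref{lm:support_inclusion}. The only divergence is the transport mechanism: the paper identifies $\bwt{N}$ with $\cl{\widetilde{N}}^{\bwt{M}}$ as equivalent compactifications (which is legitimate since $\widetilde{N}$ is closed in the metric space $\widetilde{M}$, hence $C^*$-embedded by Tietze), whereas you push the measure forward along the canonical map $\iota:\bwt{N}\rightarrow\bwt{M}$, which sidesteps that identification at the cost of the routine density-and-continuity checks $(\Phi f)\circ\iota=\Phi_N(f|_N)$ and $\pp\circ\iota=(j\times j)\circ\pp_N$.
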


\begin{proof}
Let $S=\supp(m)\cup\set{0}$, then $m\in\lipfree{S}$ and so Proposition \ref{eq:norm_attaining_representation} yields a positive measure $\mu\in\meas{\bwt{S}}$ with $\norm{\mu}=\norm{m}$ and such that $\dual{(\Phi|_{\Lip_0(S)})}\mu=m$. It is easily checked that $\bwt{S}$ and the closure $\cl{\widetilde{S}}^{\bwt{M}}$ of $\widetilde{S}$ in $\bwt{M}$ are equivalent compactifications of $\widetilde{S}$, and that $\beta S$ can be similarly identified with $\cl{S}^{\beta M}$. Therefore $\mu$ can be identified with an element of $\meas{\bwt{M}}$ that is supported on $\cl{\widetilde{S}}^{\bwt{M}}$ and such that $\dual{\Phi}\mu=m$. Clearly $\pp_s(\supp(\mu))\subset\beta S=\cl{S}^{\beta M}$ so the rightmost inclusion holds for this $\mu$, whereas the leftmost inclusion is implied by Lemma \ref{lm:support_inclusion}.
\end{proof}

The representation from Proposition \ref{pr:minimal_support_representation} is not unique in general, even when the base point may be disregarded. For instance, let $M=\set{0,1,2,3}\subset\RR$ and $m=\delta(2)+\delta(3)-2\delta(1)$. It is straightforward to check that $\norm{m}=3$ and $\supp(m)=\set{1,2,3}$. Then $\mu=\delta_{(2,1)}+2\delta_{(3,1)}$ and $\mu'=2\delta_{(2,1)}+\delta_{(3,2)}$ are two different positive representations of $m$ with minimal norm and minimal shadow.

Let us remark here that many of the previous statements admit generalizations. For instance, when $M$ is not proper, the same arguments show that Lemma \ref{lm:phi_bounded_support} and Proposition \ref{pr:finite_concentration} hold with $\pp^{-1}(\rcomp{M}\times\rcomp{M})$ in place of $\pp^{-1}(M\times M)$, where $\rcomp{M}$ is the subset of $\beta M$ consisting of points that are limits of bounded nets. For proper $M$, that set is simply $M$. There are also versions of Lemma \ref{lm:support_inclusion} and Proposition \ref{pr:minimal_support_representation} that hold for functionals in $\ddual{\lipfree{M}}$ that are not weak$^\ast$ continuous, as long as we consider an appropriate definition of support for such functionals (see \cite[Section 3]{AP_2020_measures}). However, introducing this definition is a harder task that requires several new notions and replacing references to $\beta M$ with the lesser-known Samuel (or uniform) compactification of $M$. We have thus decided not to include the details here as they will not be needed for the proof of Theorem \ref{th:extreme_proper} in the proper case.

\section{Proof of the main result}

Let us now proceed with the proof of Theorem \ref{th:extreme_proper}. We start with the following fact:

\begin{lemma}
\label{lm:extreme_measure}
Let $m\in\ext\ball{\lipfree{M}}$ and $\mu\in\meas{\bwt{M}}$ be such that $\dual{\Phi}\mu=m$, $\norm{\mu}=1$, and $\mu\geq 0$. Suppose that $\lambda\in\meas{\bwt{M}}$ is such that $0\leq\lambda\leq\mu$ and $\dual{\Phi}\lambda\in\lipfree{M}$. Then $\dual{\Phi}\lambda=\norm{\lambda}\cdot m$.
\end{lemma}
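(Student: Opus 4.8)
The plan is to exploit the positivity of $\mu$ and $\lambda$ to split $m$ as a convex combination of two elements of $\lipfree{M}$ and then invoke the extremality of $m$. First I would dispose of the degenerate cases. Since $\lambda\geq 0$, if $\norm{\lambda}=0$ then $\lambda=0$ and the claim reads $\dual{\Phi}\lambda=0=0\cdot m$. If $\norm{\lambda}=1$, then $\mu-\lambda$ is a positive measure of total mass $\norm{\mu}-\norm{\lambda}=0$, hence $\lambda=\mu$ and $\dual{\Phi}\lambda=m=1\cdot m$. So I may assume $0<\alpha<1$, where I write $\alpha=\norm{\lambda}$.

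Next I would decompose $\mu=\lambda+(\mu-\lambda)$, where both summands are positive by the hypothesis $0\leq\lambda\leq\mu$. Setting $m_1=\dual{\Phi}\lambda$ and $m_2=\dual{\Phi}(\mu-\lambda)$, linearity gives $m_1+m_2=\dual{\Phi}\mu=m$. The hypothesis supplies $m_1\in\lipfree{M}$, and since $m\in\lipfree{M}$, also $m_2=m-m_1\in\lipfree{M}$. This is precisely where the assumption $\dual{\Phi}\lambda\in\lipfree{M}$ is essential: it keeps both pieces inside the predual, so that the extremality of $m$ in $\ball{\lipfree{M}}$ can be brought to bear.

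Then I would run the norm bookkeeping. Because $\dual{\Phi}$ is a quotient map we have $\norm{m_1}\leq\norm{\lambda}=\alpha$ and $\norm{m_2}\leq\norm{\mu-\lambda}=1-\alpha$, the latter using that the total variation norm is additive on positive measures. On the other hand, $\norm{m}=1$ since extreme points of a unit ball lie on its sphere, whence $\norm{m_1}+\norm{m_2}\geq\norm{m_1+m_2}=\norm{m}=1$. Combining these with $\alpha+(1-\alpha)=1$ forces equality everywhere, so $\norm{m_1}=\alpha$ and $\norm{m_2}=1-\alpha$.

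Finally I would apply extremality. Since $0<\alpha<1$ I can write
\begin{equation*}
m = \alpha\cdot\frac{m_1}{\alpha} + (1-\alpha)\cdot\frac{m_2}{1-\alpha},
\end{equation*}
a proper convex combination of the unit vectors $m_1/\alpha,\,m_2/(1-\alpha)\in\ball{\lipfree{M}}$. As $m$ is an extreme point, both factors equal $m$; in particular $m_1=\alpha m=\norm{\lambda}\cdot m$, which is the desired identity. I do not anticipate a genuine obstacle here: the only things to get right are that positivity makes the total variation norms add up cleanly, and that the hypothesis on $\lambda$ is exactly what places $m_2$ in $\lipfree{M}$; the remainder is the textbook convexity argument.
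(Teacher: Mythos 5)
Your proof is correct and follows essentially the same route as the paper: decompose $\mu=\lambda+(\mu-\lambda)$ into positive parts, use additivity of the total variation norm on positive measures together with $\norm{\dual{\Phi}}=1$ to force all norm inequalities into equalities, and then invoke extremality of $m$ on the resulting proper convex combination. Your explicit handling of the degenerate cases via $\norm{\lambda}\in\set{0,1}$ and your remark pinpointing where the hypothesis $\dual{\Phi}\lambda\in\lipfree{M}$ enters are minor presentational differences, not a different argument.
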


\begin{proof}
Let $\nu=\mu-\lambda$, so that $\nu\geq 0$ and $\dual{\Phi}\nu\in\lipfree{M}$. If either $\lambda=0$ or $\nu=0$ then the lemma holds trivially, so assume otherwise. By positivity we have $\norm{\lambda}+\norm{\nu}=\norm{\mu}=1$. Using the fact that $\norm{\dual{\Phi}}=1$ we get
\begin{equation*}
1 = \norm{\dual{\Phi}\mu} \leq \norm{\dual{\Phi}\lambda}+\norm{\dual{\Phi}\nu} \leq \norm{\lambda}+\norm{\nu} = 1 .
\end{equation*}
All inequalities are therefore equalities. It follows that $\norm{\dual{\Phi}\lambda'}=\norm{\dual{\Phi}\nu'}=1$, where $\lambda'=\lambda/\norm{\lambda}$ and $\nu'=\nu/\norm{\nu}$. Now notice that
\begin{equation*}
m = \dual{\Phi}\lambda + \dual{\Phi}\nu = \norm{\lambda}\cdot\dual{\Phi}\lambda' + \norm{\nu}\cdot\dual{\Phi}\nu'
\end{equation*}
is a convex combination of $\dual{\Phi}\lambda',\dual{\Phi}\nu'\in\ball{\lipfree{M}}$. Hence $\dual{\Phi}\lambda'=m$ as claimed.
\end{proof}

This lemma tells us the following: if we use Proposition \ref{eq:norm_attaining_representation} to choose a representation $\mu$ of $m\in\ext\ball{\lipfree{M}}$ that is positive and has minimal norm, then every measure $\lambda$ defined by $d\lambda = h\,d\mu$ where $0\leq h\leq 1$ will automatically represent a multiple of the same functional $m$, \emph{as long as we know that it represents an element of $\lipfree{M}$}. It is easy to force such a measure $\lambda$ to be concentrated on certain regions by choosing $h$ appropriately, and this will yield upper bounds for $\supp(m)$ by Lemma \ref{lm:support_inclusion}. The difficulty lies in ensuring that $\lambda$ represents a weak$^\ast$ continuous functional -- the condition from Proposition \ref{pr:measure_in_fm} is too simple to be useful here. The next lemma shows how we can accomplish this when $M$ is proper.

\begin{lemma}
\label{lm:weighted_measure_proper}
Suppose that $M$ is proper, and let $h$ be a Lipschitz function on $M$ with bounded support. Then, for every $\mu\in\meas{\bwt{M}}$ such that $\dual{\Phi}\mu\in\lipfree{M}$, we also have $\dual{\Phi}\lambda\in\lipfree{M}$ where $d\lambda=(h\circ\pp_1)\,d\mu$.
\end{lemma}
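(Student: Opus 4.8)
The plan is to show that $\dual{\Phi}\lambda$ is weak$^\ast$ continuous, which by the Banach--Dieudonn\'e theorem (exactly as in the proof of Proposition \ref{pr:measure_in_fm}) amounts to checking that whenever a net $(f_i)$ in $\ball{\Lip_0(M)}$ converges pointwise to $f\in\ball{\Lip_0(M)}$, we have $\duality{f_i,\dual{\Phi}\lambda}\to\duality{f,\dual{\Phi}\lambda}$. The algebraic heart of the argument is a Leibniz-type identity. On $\widetilde{M}$ one computes directly that
\[
(\Phi f)\cdot(h\circ\pp_1) \;=\; \Phi(fh)\;-\;(f\circ\pp_2)\cdot\Phi h ,
\]
since $f(x)h(x)-f(y)h(y)=h(x)(f(x)-f(y))+f(y)(h(x)-h(y))$. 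I would first argue that each of the three terms extends continuously to $\bwt{M}$: the outer two because $fh$ and $h$ are Lipschitz, and the middle one because $(x,y)\mapsto f(y)\Phi h(x,y)$ is \emph{bounded} on $\widetilde{M}$ (here properness and the bounded support of $h$ enter, together with $\lipnorm{\cdot}=\norm{\Phi\cdot}_\infty$, which keeps $f(y)\Phi h(x,y)$ controlled even as $y$ runs off to infinity). Since the identity holds on the dense set $\widetilde{M}$, it persists on all of $\bwt{M}$, and integrating against $\mu$ splits $\dual{\Phi}\lambda$ as $T-R$, where $T(f)=\int\Phi(fh)\,d\mu=\duality{fh,\dual{\Phi}\mu}$ and $R(f)=\int(f\circ\pp_2)\,\Phi h\,d\mu$.

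The term $T$ is the easy one. The multiplication map $f\mapsto fh$ sends $\ball{\Lip_0(M)}$ into a fixed multiple of $\ball{\Lip_0(M)}$ (one checks $\lipnorm{fh}\le\norm{h}_\infty+R\lipnorm{h}$ when $\supp h\subset B(0,R)$, in the spirit of \cite[Lemma 2.3]{APPP_2020}) and is continuous for pointwise convergence; hence $f_ih\to fh$ weak$^\ast$. Since $\dual{\Phi}\mu\in\lipfree{M}$ by hypothesis, it follows at once that $T(f_i)=\duality{f_ih,\dual{\Phi}\mu}\to\duality{fh,\dual{\Phi}\mu}=T(f)$, so $T$ defines an element of $\lipfree{M}$. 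Note that this step sidesteps any pointwise analysis of $\Phi f_i$ on $\bwt{M}\setminus\widetilde{M}$: the possibly wild ``diagonal'' behaviour of the difference quotients is absorbed entirely by the weak$^\ast$ continuity of $\dual{\Phi}\mu$.

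For $R$ I would pass to a pushforward. By Lemma \ref{lm:phi_bounded_support}, $\Phi h$ vanishes outside $\pp^{-1}(M\times M)$, so the finite signed measure $\omega$ with $d\omega=\Phi h\,d\mu$ is concentrated where $\pp_2$ takes values in $M$; its pushforward $\theta=(\pp_2)_\sharp\omega$ is then a finite Radon measure living on $M$, and the aim is to identify $R(f)$ with $\int_M f\,d\theta$. Weak$^\ast$ continuity of $f\mapsto\int_M f\,d\theta$ would then follow by dominated convergence: $f_i\to f$ pointwise with $\abs{f_i}\le d(\cdot,0)$, and $d(\cdot,0)\in L^1(\abs{\theta})$ because $\int d(\cdot,0)\,d\abs{\theta}=\int\abs{d(\pp_2,0)\,\Phi h}\,d\abs{\mu}$, whose integrand is again bounded by properness and the bounded support of $h$. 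This would give $R\in\lipfree{M}$ and hence $\dual{\Phi}\lambda=T-R\in\lipfree{M}$.

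The step I expect to be delicate is precisely this identification of $R(f)$ with $\int_M f\,d\theta$, i.e. controlling the integrand near ``infinity''. The subtlety is that the continuous extension to $\bwt{M}$ of $(x,y)\mapsto f(y)\Phi h(x,y)$ need not agree with $(f\circ\pp_2)\cdot\Phi h$ at points $\zeta$ where $\pp_2(\zeta)\in\beta M\setminus M$ escapes to infinity while $\pp_1(\zeta)\in\supp h$: there $\Phi h(\zeta)=0$ but $f(y)\to\infty$ along the defining net, leaving a nonzero limit equal to $-\,\beta h(\pp_1(\zeta))\,\Phi f(\zeta)$, and at such $\zeta$ the map $f\mapsto\Phi f(\zeta)$ is \emph{not} weak$^\ast$ continuous. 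Thus any mass of $\mu$ on this ``infinity'' region would obstruct the conclusion, and ruling it out is the crux. This is where properness is indispensable: Lemma \ref{lm:phi_bounded_support} applied to $fh$ forces $\Phi(fh)$ to vanish off $\pp^{-1}(M\times M)$, and in the situation where the lemma is applied the representing measure $\mu$ is concentrated on $\pp^{-1}(M\times M)$, so this boundary contribution vanishes and the pushforward identity holds.
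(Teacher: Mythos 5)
Your proposal is, in its core, the same proof as the paper's: the identical Leibniz identity, the identical splitting $\duality{f,\dual{\Phi}\lambda}=\duality{fh,\dual{\Phi}\mu}-\int_M f\,d\theta$ with $\theta=(\pp_2)_\sharp\nu$ and $d\nu=(\Phi h)\,d\mu$, the same use of Lemma \ref{lm:phi_bounded_support} to place $\theta$ on $M$, and the same decisive estimate that $(\rho\circ\pp_2)\abs{\Phi h}$ is bounded (the paper proves this by a three-case computation; you only assert it, so write it out). Two smaller points: the weak$^\ast$ continuity of $f\mapsto\duality{fh,\dual{\Phi}\mu}$ is exactly \cite[Lemma 2.3]{APPP_2020}, which your norm estimate for $fh$ essentially reproves; and dominated convergence is not valid for nets, so for the term $\int_M f_i\,d\theta$ you must either cite \cite[Proposition 4.4]{AP_2020_measures} as the paper does, or note that a proper space is $\sigma$-compact, hence separable, so $\ball{\Lip_0(M)}$ is weak$^\ast$ metrizable and sequences suffice.

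The place where you genuinely deviate is your final paragraph, and it matters more than you seem to think. You are right that the continuous extension of $(x,y)\mapsto f(y)\Phi h(x,y)$ equals $-(h\circ\pp_1)\cdot\Phi f$, not $0$, at points $\zeta$ with $\pp_1(\zeta)\in M$, $h(\pp_1(\zeta))\neq 0$ and $\pp_2(\zeta)\in\beta M\setminus M$, and that $\mu$-mass on that region destroys the pushforward identity. But your way out --- that ``in the situation where the lemma is applied'' the measure $\mu$ is concentrated on $\pp^{-1}(M\times M)$ --- is not among the hypotheses of Lemma \ref{lm:weighted_measure_proper}, which quantifies over \emph{all} $\mu$ with $\dual{\Phi}\mu\in\lipfree{M}$; so, formally, you prove a strictly weaker statement and there is a gap. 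The twist is that the restriction you impose is in fact necessary: take $M=[0,\infty)$, let $\zeta\in\bwt{M}$ be a cluster point of the sequence $((0,n))_n$, let $\mu=\delta_\zeta+\delta_{\rr(\zeta)}$, and let $h$ have bounded support with $h(0)=1$. Then $\dual{\Phi}\mu=0\in\lipfree{M}$ by Lemma \ref{lm:pushforward}, while $\lambda=\delta_\zeta$ because $h\circ\pp_1$ equals $1$ at $\zeta$ and $0$ at $\rr(\zeta)$ (the extension of $h$ vanishes at $\pp_1(\rr(\zeta))=\pp_2(\zeta)\in\beta M\setminus M$); and $\dual{\Phi}\delta_\zeta\colon f\mapsto\Phi f(\zeta)$ is not weak$^\ast$ continuous, since on the functions $f_k(t)=(t-k)^+$, which tend to $0$ pointwise in $\ball{\Lip_0(M)}$, it takes the constant value $-1$. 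So the lemma as stated fails, and the paper's own proof slips at exactly the spot you flagged: the equality $\int_{\bwt{M}}(f\circ\pp_2)(\Phi h)\,d\mu=\int_{\bwt{M}}(f\circ\pp_2)\,d\nu$ is unjustified without a concentration assumption. The correct statement adds the hypothesis that $\mu$ is concentrated on $\pp^{-1}(M\times M)$ (guaranteed, by Proposition \ref{pr:finite_concentration}, when $\mu\geq 0$ and $\norm{\mu}=\norm{\dual{\Phi}\mu}$); that is what the proof of Theorem \ref{th:extreme_proper} actually provides, and every $0\leq\lambda\leq\mu$ inherits it, so the main theorem is unaffected. With that hypothesis made explicit, your argument is complete and coincides with the paper's intended one.
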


By symmetry (or by analogous construction), the same is true if we replace $\pp_1$ with $\pp_2$.

\begin{proof}
Let us start by noticing that the identity
\begin{equation*}
\Phi (fh)(x,y) = \Phi f(x,y)\cdot h(x) + \Phi h(x,y)\cdot f(y)
\end{equation*}
holds for any $f\in\Lip_0(M)$ and any $(x,y)\in\widetilde{M}$. It follows that the continuous extensions
\begin{equation*}
\Phi (fh) = (\Phi f)\cdot (h\circ\pp_1) + (\Phi h)\cdot (f\circ\pp_2)
\end{equation*}
also agree in $\bwt{M}$. Since $fh\in\Lip_0(M)$, we may integrate against $\mu$ to obtain
\begin{align*}
\duality{fh,\dual{\Phi}\mu} &= \int_{\bwt{M}}(\Phi f)(h\circ\pp_1)\,d\mu + \int_{\bwt{M}}(f\circ\pp_2)(\Phi h)\,d\mu \\
&= \duality{f,\dual{\Phi}\lambda} + \int_{\bwt{M}}(f\circ\pp_2)\,d\nu \\
&= \duality{f,\dual{\Phi}\lambda} + \int_{\beta M} f\,d((\pp_2)_\sharp\nu)
\end{align*}
where $d\nu=(\Phi h)\,d\mu$. Notice that $\lambda,\nu\in\meas{\bwt{M}}$, as $h\circ\pp_1$ and $\Phi h$ are continuous and bounded. By Lemma \ref{lm:phi_bounded_support}, $\nu$ is in fact concentrated on $\pp^{-1}(M\times M)$ and therefore $(\pp_2)_\sharp\nu$ is concentrated on $M$.

Recall that the functional $f\mapsto\duality{fh,\dual{\Phi}\mu}$ is weak$^\ast$ continuous (see e.g. \cite[Lemma 2.3]{APPP_2020}). Therefore, in order to prove that $\dual{\Phi}\lambda\in\lipfree{M}$ it will be enough to show that the functional
\begin{equation*}
f \mapsto \int_{\beta M} f\,d((\pp_2)_\sharp\nu) = \int_M f\,d((\pp_2)_\sharp\nu)
\end{equation*}
also belongs to $\lipfree{M}$. By \cite[Proposition 4.4]{AP_2020_measures}, it suffices to show that the function $\rho\in\Lip_0(M)$ given by $\rho(x)=d(x,0)$ is integrable against $(\pp_2)_\sharp\nu$, i.e. that the integral
\begin{equation*}
\int_{\bwt{M}}(\rho\circ\pp_2)(\Phi h)\,d\mu
\end{equation*}
is finite. Thus it will be enough to prove that $g=(\rho\circ\pp_2)\abs{\Phi h}$ is bounded. In the case where $M$ is compact, this is immediate as $\rho$ is bounded. Let us now verify it in the unbounded case. Fix $r>0$ such that $\supp(h)\subset B(0,r)$. Let $(x,y)\in\widetilde{M}$, and consider three cases:
\begin{itemize}
\item If $y\in B(0,2r)$, then $g(x,y)=d(y,0)\abs{\Phi h(x,y)}\leq 2r\lipnorm{h}$.
\item If $y\notin B(0,2r)$ and $x\in B(0,r)$, then $h(y)=0$ and
\begin{equation*}
g(x,y) = d(y,0)\frac{\abs{h(x)}}{d(x,y)} \leq \frac{d(y,0)}{d(x,y)}\cdot r\lipnorm{h} \leq \pare{1+\frac{d(x,0)}{d(x,y)}}\cdot r\lipnorm{h} \leq 2r\lipnorm{h} .
\end{equation*}
\item If $y\notin B(0,2r)$ and $x\notin B(0,r)$, then $h(y)=h(x)=0$ and $g(x,y)=0$.
\end{itemize}
Hence $\norm{g}_\infty\leq 2r\lipnorm{h}$ on $\widetilde{M}$ and thus also on $\bwt{M}$. This finishes the proof.
\end{proof}

We can now finally prove Theorem \ref{th:extreme_proper}, following the argument outlined before Lemma \ref{lm:weighted_measure_proper}.

\begin{proof}[Proof of Theorem \ref{th:extreme_proper}]
Let $m\in\ext\ball{\lipfree{M}}$. Apply Proposition \ref{eq:norm_attaining_representation} to find $\mu\in\meas{\bwt{M}}$ such that $\dual{\Phi}\mu=m$, $\norm{\mu}=1$ and $\mu\geq 0$. By Proposition \ref{pr:finite_concentration}, $\mu$ is concentrated on the open set $\pp^{-1}(M\times M)$, therefore $\supp(\mu)$ must intersect that set. Hence we may choose a point $\zeta\in\supp(\mu)$ such that $x=\pp_1(\zeta)$ and $y=\pp_2(\zeta)$ belong to $M$ (we allow the case $x=y$). Fix $z\in M\setminus\set{x,y}$, and we will show that $z\notin\supp(m)$.

Construct a Lipschitz function $h$ on $M$ with bounded support and such that $0\leq h\leq 1$, $h=1$ in a neighborhood of $x$, and $h=0$ in a neighborhood of $z$. If $\lambda\in\meas{\bwt{M}}$ is given by $d\lambda=(h\circ\pp_1)\,d\mu$, then $0\leq\lambda\leq\mu$ and moreover $\dual{\Phi}\lambda\in\lipfree{M}$ by Lemma \ref{lm:weighted_measure_proper}. It is clear that $\lambda\neq 0$ as $\zeta\in\supp(\lambda)$. Thus, writing $\lambda'=\lambda/\norm{\lambda}$, we have $\dual{\Phi}\lambda'=m$ by Lemma \ref{lm:extreme_measure}. We may therefore replace our representing measure $\mu$ with $\lambda'$, and hence assume that $\zeta\in\supp(\mu)$ but $z\notin\pp_1(\supp(\mu))$.

A similar construction with $y$ and $\pp_2$ in place of $x$ and $\pp_1$, respectively, shows that we may replace $\mu$ again and assume that $\zeta\in\supp(\mu)$ but $z\notin\pp_2(\supp(\mu))$. This implies that $z\notin\pp_s(\supp(\mu))$ and therefore $z\notin\supp(m)$ by Lemma \ref{lm:support_inclusion}. Since $z$ was arbitrary, we have thus proved that $\supp(m)\subset\set{x,y}$ is finite.

The main result now follows from \cite[Theorem 1.1]{AP_2020_rmi}, which characterizes extreme points with finite support. The fact that $m$ is an exposed point follows from \cite[Theorem 3.2]{APPP_2020}, and it is a preserved extreme point by either \cite[Theorem 4.2]{AG_2019} for the compact case or \cite[Proposition 2.3(a)]{AP_2020_rmi} for the proper case.
\end{proof}

The following corollary is immediate:

\begin{corollary}
Let $M$ be a complete metric space and $m\in\ext\ball{\lipfree{M}}$. If $\supp(m)$ is proper, then $m$ is an elementary molecule.
\end{corollary}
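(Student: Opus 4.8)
The plan is to reduce the statement to Theorem \ref{th:extreme_proper} by localizing to the support of $m$. Set $N = \supp(m) \cup \set{0}$ and regard it as a pointed metric space with the same base point $0$. By the definition of support we have $m \in \lipfree{N}$, and since $\lipfree{N}$ sits isometrically inside $\lipfree{M}$ via the identification $\lipfree{N} = \cl{\lspan}\,\delta(N)$, the norm of $m$ computed in $\lipfree{N}$ still equals $1$.

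First I would check that $N$ is proper. As $\supp(m)$ is closed in the complete space $M$ it is itself complete, and adjoining the single point $0$ affects neither completeness nor properness: for any closed bounded subset $A \subset N$, the intersection $A \cap \supp(m)$ is closed and bounded in $\supp(m)$, hence compact by hypothesis, and $A$ differs from this set by at most the point $0$, so $A$ is compact as well.

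Next I would observe that $m$ remains an extreme point after passing to the smaller ball $\ball{\lipfree{N}}$. This is the standard fact that extremality descends to subspaces: if $m = \tfrac12(u+v)$ with $u,v \in \ball{\lipfree{N}} \subset \ball{\lipfree{M}}$, then extremality of $m$ in $\ball{\lipfree{M}}$ forces $u = v = m$. Hence $m \in \ext\ball{\lipfree{N}}$.

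With $N$ proper and $m$ an extreme point of $\ball{\lipfree{N}}$, Theorem \ref{th:extreme_proper} applies to $N$ and yields $m = \mol{pq}$ for some distinct $p,q \in N$. Since $N \subset M$ and the metric is inherited, this $\mol{pq}$ is by definition an elementary molecule of $\lipfree{M}$, which is exactly what we want. I expect no genuine obstacle here: all of the substance is contained in Theorem \ref{th:extreme_proper}, and the only verifications needed—that $N$ is proper and that extremality passes to the subspace—are routine.
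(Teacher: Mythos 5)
Your proposal is correct and is essentially identical to the paper's own proof: the paper also sets $S=\supp(m)\cup\set{0}$, notes that $m\in\lipfree{S}$ by the definition of support and that $m$ remains extreme in $\ball{\lipfree{S}}$, and then invokes Theorem \ref{th:extreme_proper}. The additional verifications you spell out---that adjoining the base point preserves properness, and that extremality descends to the subspace ball---are exactly the routine details the paper leaves implicit, and both are carried out correctly.
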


\begin{proof}
Let $S=\supp(m)\cup\set{0}$, then $m\in\lipfree{S}$ by the definition of support and obviously $m\in\ext\ball{\lipfree{S}}$, thus $m$ is a molecule by Theorem \ref{th:extreme_proper}.
\end{proof}

We finish this paper by remarking that most of the arguments developed here are general and work for any choice of complete metric space $M$. The only place where compactness is used in an essential way is Lemma \ref{lm:weighted_measure_proper}. So finding a different argument or construction that yields measures $\lambda\leq\mu$ representing weak$^\ast$ continuous functionals might provide a way to extend Theorem \ref{th:extreme_proper} to the noncompact case.

\section*{Acknowledgments}
We wish to thank Eva Perneck\'a and the anonymous referee for many suggestions and corrections to the original manuscript.

The author was partially supported by the Spanish Ministry of Economy, Industry and Competitiveness under Grant MTM2017-83262-C2-2-P.


\end{document}